\documentclass[a4paper,12pt,twoside,notitlepage,reqno]{amsart}

\usepackage{amsmath,amssymb,amsbsy,amsthm}
\usepackage[hmargin=1.4in,vmargin=1.4in]{geometry}
\usepackage{url}

\usepackage{tikz}
\usetikzlibrary{matrix}
\usepackage[linktocpage]{hyperref}
\hypersetup{
    colorlinks=true,        
    linkcolor=red,          
    citecolor=red,         
    filecolor=magenta,      
    urlcolor=cyan           
}


\addtocontents{toc}{\protect\setcounter{tocdepth}{1}}

\usepackage{eucal}


\newcommand{\Q}{ \mathbb{Q} }
\newcommand{\Z}{ \mathbb{Z} }
\newcommand{\N}{ \mathbb{N} }
\newcommand{\F}{ \mathbb{F} }



\theoremstyle{plain}
	\newtheorem{thm}{Theorem}
	
		\numberwithin{thm}{section}
	\newtheorem{lemma}[thm]{Lemma}
	
	\newtheorem{cor}[thm]{Corollary}
	\newtheorem{conj}[thm]{Conjecture}

	\newtheorem*{thm*}{Theorem}
	\newtheorem*{lemma*}{Lemma}
	\newtheorem*{prop*}{Proposition}
	\newtheorem*{cor*}{Corollary}
	\newtheorem*{conj*}{Conjecture}

\theoremstyle{definition}
	
	\newtheorem*{example*}{Example}



\begin{document}


\title[Variant of the dynamical Mordell-Lang conjecture]{A fusion variant of the classical and dynamical Mordell-Lang conjectures in positive characteristic}

\author{Jason Bell}
\address{University of Waterloo \\
Department of Pure Mathematics \\
Waterloo, Ontario \\
N2L 3G1, Canada}
\email{jpbell@uwaterloo.ca}

\author{Dragos Ghioca}
\address{University of British Columbia\\
Department of Mathematics\\
Vancouver, BC\\
V6T 1Z2, Canada}
\email{dghioca@math.ubc.ca}

\thanks{The authors were partially supported by Discovery Grants from the National Science and Engineering Research Council of Canada.}

\begin{abstract}
We study an open question at the interplay between the classical and the dynamical Mordell-Lang conjectures in positive characteristic. 
Let $K$ be an algebraically closed field of positive characteristic, let $G$ be
a finitely generated subgroup of the multiplicative group of $K$, and let $X$ be a (irreducible) quasiprojective variety defined over $K$.  We consider $K$-valued sequences of the form
$a_n:=f(\varphi^n(x_0))$, where $\varphi\colon X\dasharrow X$ and $f\colon X\dasharrow\mathbb{P}^1$ are rational maps
defined over $K$ and $x_0\in X$ is a point whose forward orbit avoids the indeterminacy loci
of $\varphi$ and $f$.  We show that the set of $n$ for which $a_n\in G$ is a finite union of arithmetic progressions along with
a set of upper Banach density zero.  In addition, we show that if $a_n\in G$ for every $n$ and the $\varphi$ orbit
of $x$ is Zariski dense in $X$ then {there is} a multiplicative torus $\mathbb{G}_m^d$ and maps $\Psi:\mathbb{G}_m^d \to \mathbb{G}_m^d$
and $g:\mathbb{G}_m^d \to \mathbb{G}_m$ such that $a_n = g\circ \Psi^n(y)$ for some $y\in \mathbb{G}_m^d$. We then describe various applications of  our results.
\end{abstract}

\maketitle



\section{Introduction}


\subsection{Notation}
\label{subsec:notation}

In this note, we consider \textit{rational dynamical systems}, which are given by a pair $(X,\varphi)$, where $X$ is always an irreducible quasiprojective variety defined over a field $K$, and $\varphi:X\dashrightarrow X$ is a rational map. The forward $\varphi$-orbit of a point $x_0\in X$ is given by
\[ O_\varphi(x_0):=\{x_0,\varphi(x_0),\varphi^2(x_0),\ldots\} \]
 as long as this orbit is defined (\textit{i.e.}, $x_0$ is outside the indeterminacy locus of $\varphi^n$ for every $n\geq 0$).

Throughout our paper, we let $\N:=\{1,2,3,\ldots\}$ and $\N_0:=\N\cup\{0\}$. If $R$ is a ring then $R^*$ is its multiplicative group of units.
An arithmetic progression is a set of the form $\{a+bn\}_{n\geq 0}\subseteq \N_0$ where $a,b\in \N_0$. We take a singleton to be an arithmetic progression with $b=0$. A subset $N\subseteq \N_0$ is called \textit{eventually periodic} if it is a union of finitely many arithmetic progressions. Finally, the \emph{(upper) Banach density} of a subset $S\subseteq \N_0$ is
\[ \delta(S):=\limsup_{|I|\rightarrow\infty}\frac{|S\cap I|}{|I|}, \]
where $I$ ranges over all non-empty intervals of $\N_0$ (see~\cite[Definition 3.7]{Fur81}).


\subsection{Our main results}
\label{subsec:results}

The purpose of this short note is to show that the results of \cite{BCE} hold (after slight necessary modifications to the statements and proofs) over fields of positive characteristic. So, we prove the following result.
\begin{thm}
\label{main theorem}
Let $X$ be a quasiprojective variety over a field $K$ of positive characteristic, let $\varphi:X\dashrightarrow X$ be a rational map, let $f:X\dashrightarrow \mathbb{P}^1$ be a rational function, and let $G\subset K^*$ be a finitely generated subgroup. If $x_0\in X(K)$ is a point with well-defined forward $\varphi$-orbit that also avoids the indeterminacy locus of $f$, then the set
\[{ N :=  \left\{n\in \N_0 : f(\varphi^n(x_0)) \in G\right\}} \]
is a finite union of arithmetic progressions along with a set of upper Banach density zero.
\end{thm}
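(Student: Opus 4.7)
The plan is to adapt the approach of \cite{BCE} to positive characteristic, replacing the classical $p$-adic Skolem--Mahler--Lech theorem (which drives the characteristic zero argument) by its positive-characteristic analog, whose exceptional set can be an $F$-set of upper Banach density zero. The requisite structure theory is provided by Moosa--Scanlon's description of orbit-subvariety intersections in commutative algebraic groups in positive characteristic, together with the dynamical Mordell--Lang results of Ghioca--Scanlon for semiabelian varieties in characteristic $p$.

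The first step is to reduce to $K$ finitely generated over $\F_p$: choose a subfield $K_0 \subset K$ containing the coordinates of $x_0$, the coefficients of $\varphi$ and $f$, and a generating set for $G$. Next, fix a finite set $S$ of places of $K_0$ containing all places where some generator of $G$ has nonzero valuation, so $G \subset \mathcal{O}_S^*$. The membership condition then decouples naturally: $f(\varphi^n(x_0)) \in G$ requires (a) $v(f(\varphi^n(x_0))) = 0$ for every place $v \notin S$, and (b) the resulting $S$-unit lies in the specific finitely generated subgroup $G \subset \mathcal{O}_S^*$. For (a), all but finitely many places $v \notin S$ automatically satisfy $v(f(\varphi^n(x_0))) = 0$ for every $n$ (those at which the data $\varphi$, $f$, $x_0$ have no bad reduction), leaving a finite list of extra valuation conditions; for each such $v$, the set $\{n : v(f(\varphi^n(x_0))) = 0\}$ can be understood via reduction modulo $v$ as the set of $n$ for which the reduced orbit avoids the zero divisor of the reduced $f$---a dynamical Mordell--Lang question in the reduced setting, whose complement is a finite union of arithmetic progressions plus an $F$-set of upper Banach density zero. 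For (b), the $S$-unit condition then becomes a Mordell--Lang problem for $\mathbb{G}_m^{|S|}$, which by Hrushovski's theorem (and its dynamical refinements) admits a similar decomposition.

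The main obstacle, and where the positive-characteristic proof departs substantively from the characteristic zero argument of \cite{BCE}, is in combining the contributions from (a) and (b) while preserving both the density-zero property of the exceptional piece and the arithmetic-progression structure of the periodic piece. This requires verifying closure properties of $F$-sets (in the sense of Moosa--Scanlon) under finite unions and intersections---a formal statement, but one whose careful application here is delicate, because the dynamics $\varphi$ on $X$ is not assumed to be an endomorphism of an algebraic group. Extracting the relevant semiabelian structure will likely require an auxiliary construction: for instance, augmenting the dynamical system by a torus factor $\mathbb{G}_m^r$ equipped with multiplication by the generators of $G$, and transferring the membership question to an orbit-intersection problem on the resulting commutative algebraic group, so that Hrushovski's and Moosa--Scanlon's theorems can be invoked in a clean setting.
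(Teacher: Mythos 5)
There is a genuine gap at the very first reduction. You claim that, after fixing a finite set $S$ of places with $G\subseteq \mathcal{O}_S^*$, all but finitely many places $v\notin S$ automatically satisfy $v(f(\varphi^n(x_0)))=0$ for \emph{every} $n$, so that the $S$-unit condition (a) reduces to finitely many extra valuation conditions. This is false for general rational dynamical systems: good reduction of $(X,\varphi,f,x_0)$ at $v$ does not prevent the value $f(\varphi^n(x_0))$ from acquiring a zero or pole at $v$ for some $n$. For instance, over $K=\F_p(t)$ take $X=\A^1$, $\varphi(x)=x^2+t$, $f(x)=x$, $x_0=t$; then $f(\varphi^n(x_0))$ is a polynomial in $t$ of degree $2^n$, and the set of places of $\F_p(t)$ at which \emph{some} orbit value has nonzero valuation is infinite. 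Consequently the condition $f(\varphi^n(x_0))\in\mathcal{O}_S^*$ cannot be checked over a fixed finite list of places, and the decoupling in (a) collapses. This is exactly the difficulty the paper's argument is built to avoid: instead of localizing, it performs a Noetherian induction on the Zariski closure of $\{\varphi^n(x_0)\colon n\in N\}$, disposes of proper closed subsets by the weak dynamical Mordell--Lang theorem for Noetherian spaces \cite{BGT}, and in the Zariski-dense case invokes the structure theorem (Theorem~\ref{main theorem 2}), whose proof rests on Lemma~\ref{dichotomy} --- a valuation-theoretic induction on the rank of $G$ replacing the characteristic-zero $S$-unit equation --- together with Roquette's finiteness of unit groups of finitely generated $\F_p$-algebras.

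Step (b) is also problematic. Even granting that all values are $S$-units, the set $\{n\colon f(\varphi^n(x_0))\in G\}$ is not the return set of an orbit to a closed subvariety of $\mathbb{G}_m^{|S|}$, so neither Hrushovski's theorem (which concerns subvarieties meeting a finitely generated subgroup) nor Moosa--Scanlon's $F$-set description applies off the shelf; and the full dynamical Mordell--Lang conjecture for $\mathbb{G}_m^N$ in characteristic $p$ is open, as the paper itself emphasizes (it encodes Diophantine equations of the shape~\eqref{eq:hard}). What makes the group-membership condition tractable in the paper is that the structure theorem first shows the sequence $f(\varphi^n(x_0))$ satisfies a multiplicative linear recurrence, after which membership in $G$ is decided inside a finitely generated abelian group (the positive-characteristic version of \cite[Proposition~2.9]{BCE}). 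Your proposal never produces that recurrence, and without it the combination of (a) and (b) cannot be carried out.
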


Our Theorem~\ref{main theorem} extends to fields of positive characteristic the result of \cite[Theorem~1.1]{BCE}. 

The case $N=\N_0$ in Theorem~\ref{main theorem} can be easily achieved: let $T:=\mathbb{G}_m^d$ be a $d$-dimensional multiplicative torus.  Then an endomorphism $\varphi$ of $T$ is a map of the form
$$(x_1,\ldots ,x_d)\mapsto \left(c_1\prod_{j} x_j^{a_{1,j}},\ldots ,c_d\prod_j x_j^{a_{d,j}}\right).$$
 {Now if} we begin with a point  $x_0=(\beta_1,\ldots ,\beta_d)$, then every point in the orbit of $x_0$ has coordinates in the multiplicative group $G$ generated by $$c_1,\ldots ,c_d,\beta_1,\ldots, \beta_d.$$  In particular, if $f:T\to \mathbb{P}^1$ is a map of the form $(x_1,\ldots ,x_d)\mapsto \kappa x_1^{p_1}\cdots x_d^{p_d}$ with $\kappa\in G$ then $f\circ \varphi^n(x_0)\in G$ for every $n\ge 0$. It is interesting to see that, \emph{actually}, each  dynamical system $(X,\varphi)$ with $N=\N_0$ is controlled  by one of this form, in the sense given in the following result.

\begin{thm} \label{main theorem 2}
Let $K$ be a field of positive characteristic, let $X$ be a quasiprojective variety with a dominant self-map {$\varphi:X\dashrightarrow X$, and let $f:  X \dashrightarrow \mathbb{P}^1$} be a dominant rational map, all defined over $K$.  Suppose that {$x_0\in X$} has the following properties:
\begin{enumerate}
\item every point in the orbit of {$x_0$} under $\varphi$ avoids the indeterminacy loci of $\varphi$ and $f$;
\item {$O_{\varphi}(x_0)$} is Zariski dense;
\item there is a finitely generated multiplicative subgroup $G$ of $K^*$ such that $f\circ \varphi^n({x_0})\in G$ for every $n\in \mathbb{N}_0$.
\end{enumerate}
Then {there exists} a rational map $\Theta:X\dashrightarrow \mathbb{G}_m^d$ for some nonnegative integer $d$, and a dominant endomorphism $\Phi:\mathbb{G}_m^d\rightarrow \mathbb{G}_m^d$ such that the following diagram commutes
\begin{center}
    \begin{tikzpicture}[node distance=1.7cm, auto]
        \node (X1) {$X$};
        \node (X2) [right of=X1] {$X$};
        \node (Ad1) [below of=X1] {$\mathbb{G}_m^d$};
        \node (Ad2) [below of=X2] {$\mathbb{G}_m^d.$};

        \draw[->,dashed] (X1) to node {$\varphi$} (X2);
        \draw[->,dashed] (X1) to node [swap] {$\Theta$} (Ad1);
        \draw[->,dashed] (X2) to node {$\Theta$} (Ad2);
        \draw[->] (Ad1) to node [swap] {$\Phi$} (Ad2);
    \end{tikzpicture}
\end{center}
Moreover, $O_{\varphi}(x_0)$ avoids the indeterminacy locus of $\Theta$ and $f=g\circ \Theta$, where $g:\mathbb{G}_m^d \to \mathbb{G}_m$ is a map of the form
\[ g(t_1,\ldots,t_d) = Ct_1^{i_1}\cdots t_d^{i_d}\]
for some $i_1,\ldots,i_d\in \Z$ and some $C\in G$.
\end{thm}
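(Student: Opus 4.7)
The plan is to follow the overall structure of the characteristic-zero argument in \cite{BCE}, modifying the key steps to work in positive characteristic. The central object is the multiplicative subgroup
\[ \Gamma := \bigl\langle K^*,\, \{f\circ\varphi^n : n\in\N_0\} \bigr\rangle \subseteq K(X)^*, \]
together with the pullback endomorphism $\varphi^*:\Gamma\to\Gamma$, which is well defined because $(f\circ\varphi^n)\circ\varphi = f\circ\varphi^{n+1}\in\Gamma$. The technical heart of the proof is the claim that the quotient $\Gamma/K^*$ is a finitely generated abelian group.

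Granted this claim, the remainder of the argument is essentially formal. After replacing $\varphi$ by a suitable iterate $\varphi^M$ (a move made legitimate by Theorem~\ref{main theorem}, which absorbs torsion contributions at the cost of a set of density zero that is already covered by the arithmetic-progression structure), we may arrange that $\Gamma/K^*$ is free abelian of some rank $d$ with $\varphi^*$ acting by an integer matrix $A\in M_d(\Z)$. Fix a basis $h_1,\ldots,h_d\in\Gamma$ and set $\Theta := (h_1,\ldots,h_d):X\dashrightarrow \mathbb{G}_m^d$. The matrix $A$ then corresponds to a dominant monomial endomorphism $\Phi:\mathbb{G}_m^d\to\mathbb{G}_m^d$, and the commutation $\Theta\circ\varphi = \Phi\circ\Theta$ holds tautologically from the definition of $A$. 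Since $f = f\circ\varphi^0\in\Gamma$, writing $f = C\,h_1^{i_1}\cdots h_d^{i_d}$ with $C\in K^*$ and $i_j\in\Z$ exhibits the factorization $f=g\circ\Theta$ with $g$ of the required form; evaluating at $x_0$ and using $f(x_0),\,h_j(x_0)\in G$ then forces $C\in G$. The orbit $O_\varphi(x_0)$ avoids the indeterminacy of $\Theta$ because each $h_i$ is a Laurent monomial in the $f\circ\varphi^n$, all of which are by hypothesis defined and take nonzero values on the orbit.

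The main obstacle is establishing the finite generation of $\Gamma/K^*$. In characteristic zero, \cite{BCE} deduces this from the Mordell-Lang theorem for tori and semiabelian varieties, applied to Zariski closures of orbit-type sequences in suitable ambient toric varieties. In positive characteristic this input must be replaced by Hrushovski's Mordell-Lang theorem, or equivalently by the Moosa-Scanlon description of the intersection of a subvariety of $\mathbb{G}_m^N$ with a finitely generated subgroup as a union of cosets together with Frobenius-orbit components. The technical task, and the one I expect to be hardest, is to exclude the Frobenius-orbit contributions in our setting: the hypothesis $f(\varphi^n(x_0))\in G$ for \emph{every} $n\in\N_0$ (rather than on a merely infinite set) is exactly what is needed to force sufficiently many multiplicative relations among the $f\circ\varphi^n$, and these relations propagate through the $\varphi^*$-action to yield finite generation of all of $\Gamma/K^*$.
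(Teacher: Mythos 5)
Your overall architecture (the group $\Gamma=\langle K^*,\{f\circ\varphi^n\}\rangle\subseteq K(X)^*$, the action of $\varphi^*$, passing to an iterate to kill torsion, and reading off $\Theta$, $\Phi$ and $g$ from a basis of $\Gamma/K^*$) does match the strategy the paper imports from \cite{BCE}, and the formal part of your argument is fine. But the proposal has a genuine gap exactly at the point you yourself flag as ``the hardest'': the finite generation of $\Gamma/K^*$ is asserted, not proved, and the route you sketch for it is not the one that works. You propose to get it from Hrushovski's Mordell--Lang theorem (or the Moosa--Scanlon description of $Y\cap G^N$ as a union of $F$-sets) and to ``exclude the Frobenius-orbit contributions.'' That exclusion is precisely the obstruction: $F$-sets can be Zariski dense in subvarieties of $\mathbb{G}_m^N$ that are not coset translates of subtori, so density of the points $(f(\varphi^{n}(x)),\ldots)$ inside $G^N$ does not, via Mordell--Lang alone, yield the multiplicative relations you need. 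You give no mechanism for ruling these components out, and the paper does not attempt to; indeed the paper invokes Hrushovski only in the opposite direction, to explain why the image of $\Theta$ need \emph{not} be a subtorus (i.e., why $\Theta$ cannot be taken dominant in characteristic $p$, unlike in \cite[Theorem~1.2]{BCE}).

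What the paper actually substitutes for the characteristic-zero input is Lemma~\ref{dichotomy}: if $X_G(g_0,\ldots,g_{N-1})$ is Zariski dense for $N=(d+e+1)^r$ functions, then the $g_i$ are multiplicatively dependent. Its proof is an induction on the rank $r$ of $G$, choosing a rank-one discrete valuation $\nu$ of $K(X)$ nontrivial on $G$, using the bound $\operatorname{trdeg}_{\mathbb{F}_p}K(X)=d+e$ to produce algebraic dependencies among blocks of $d+e+1$ of the $g_i$, and pigeonholing on which two monomials in such a dependence share the minimal valuation to descend to the rank-$(r-1)$ kernel $G_0=\ker(\nu|_G)$. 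Applied to the functions $f\circ\varphi^{n}$ along the Zariski dense orbit, this bounds the rank of the image of $\Gamma$ in $K(X)^*/K^*$, which (being a subgroup of a free abelian divisor group) is then free of finite rank. Note also that your parenthetical justification for passing to $\varphi^M$ via Theorem~\ref{main theorem} is backwards relative to the paper's logic: Theorem~\ref{main theorem} is deduced \emph{after} Theorem~\ref{main theorem 2} and Corollary~\ref{orbit}; the passage to an iterate is instead handled by the eventual periodicity results on multiplicative quasilinear recurrences (the positive-characteristic version of \cite[Proposition~2.9]{BCE}, which rests on Roquette's finite generation of unit groups). So the proposal correctly identifies where the difficulty lies but does not supply the idea (the valuation-theoretic induction of Lemma~\ref{dichotomy}) that resolves it.
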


Theorem~\ref{main theorem 2} is a positive characteristic variant of \cite[Theorem~1.2]{BCE}. However, there is an important difference between the conclusion of \cite[Theorem~1.2]{BCE} and the conclusion of Theorem~\ref{main theorem 2}: one cannot expect that the map $\Theta$ is dominant, as shown in \cite[Example~3.7]{BCE}.

One can interpret Theorem~\ref{main theorem 2} as saying that if the entire orbit of a point under a self-map has some ``coordinate'' that lies in a finitely generated multiplicative group, then there must be some geometric reason which is causing this phenomenon: in this case, it is that the dynamical behaviour of the orbit is completely
determined by the behaviour of a related system associated with a multiplicative torus.  In fact, similar to the results from \cite{BCE}, one can prove a more general version of this result involving semigroups of maps (see Theorem~\ref{cor:semi} for the precise formulation). Furthermore, as a consequence of Theorem~\ref{main theorem 2}, we get the following characterization of orbits whose values lie in a finitely generated subgroup of $K^*$, which shows that on arithmetic progressions they are well-behaved.
\begin{cor} 
\label{orbit}
Let $K$ be a field of positive characteristic and let $X$ be a quasiprojective variety with a dominant self-map {$\varphi:X \dashrightarrow X$ and let $f:\mathbb{X}\dashrightarrow \mathbb{P}^1$} be a dominant rational map, all defined over $K$.  Suppose that ${x_0\in X}$ has the following properties:
\begin{enumerate}
\item every point in the orbit of {$x_0$} under $\varphi$ avoids the indeterminacy loci of $\varphi$ and $f$;
\item there is a finitely generated multiplicative subgroup $G$ of $K^*$ such that $f\circ \varphi^n({x_0})\in G$ for every $n\in \mathbb{N}_0$.
\end{enumerate}
Then there are integers $\ell$ and $L$ with $\ell\ge 0$ and $L>0$ such that if $h_1,\ldots ,h_m$ generate $G$ then there are integer valued linear recurrences $b_{j,1}(n),\ldots ,b_{j,m}(n)$ for $j\in \{0,\ldots ,L-1\}$ such that
\[ {f\circ \varphi^{Ln+j} (x_0) = \prod_{i=1}^m h_i^{b_{j,i}(n)}}\]
for each $n\ge \ell$.
\end{cor}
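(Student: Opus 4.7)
My plan is to apply Theorem \ref{main theorem 2} after a preliminary reduction to its hypothesis of a Zariski-dense orbit, then unpack the resulting semiconjugacy using the Cayley--Hamilton theorem, and finally convert the answer into powers of the specified generators $h_1, \dots, h_m$ via the structure of finitely generated abelian groups.

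The descending chain of orbit closures $Y_n := \overline{\{\varphi^k(x_0) : k \ge n\}}$ stabilizes to a closed subvariety $Y$ on which $\varphi$ restricts to a dominant rational self-map. Decomposing $Y$ into irreducible components, the orbit of $x_0$ visits these components in an eventually periodic pattern, so there exist integers $n_0 \ge 0$ and $L_0 > 0$ such that for each $j \in \{0, \dots, L_0 - 1\}$ the subsequence $\{\varphi^{n_0 + j + n L_0}(x_0)\}_{n \ge 0}$ is Zariski dense in a fixed irreducible component $Y^{(j)}$ preserved by $\psi := \varphi^{L_0}$. Each restriction $f|_{Y^{(j)}}$ is either constant (trivial case) or dominant onto $\mathbb{P}^1$, and in the nontrivial case Theorem \ref{main theorem 2} furnishes a rational map $\Theta_j : Y^{(j)} \dashrightarrow \mathbb{G}_m^{d_j}$ semiconjugating $\psi$ to a dominant monomial endomorphism $\Phi_j$ together with $g_j(t) = C_j t_1^{\kappa_{j,1}}\cdots t_{d_j}^{\kappa_{j,d_j}}$, $C_j \in G$, satisfying $f|_{Y^{(j)}} = g_j\circ\Theta_j$. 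Writing $\Phi_j(x) = \bigl(\tilde c_{j,1}\prod_k x_k^{(A_j)_{1,k}},\dots\bigr)$ with integer exponent matrix $A_j$, the iterate $\Phi_j^n$ has component exponents given by entries of $A_j^n$ and their running sums---each an integer-valued linear recurrence by the Cayley--Hamilton theorem. Setting $\gamma_j := \Theta_j(\varphi^{n_0+j}(x_0))$, this yields
\[ f\circ\varphi^{n_0 + j + n L_0}(x_0) = C_j \cdot \prod_k \tilde c_{j,k}^{E_{j,k}(n)} \cdot \prod_k \gamma_{j,k}^{F_{j,k}(n)}, \]
with all $E_{j,k}, F_{j,k}$ integer linear recurrences in $n$.

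The next step is to re-express this in terms of the prescribed generators $h_1,\dots,h_m$. Let $H \subseteq K^*$ be the finitely generated subgroup containing $G$ together with all the scalars $C_j, \tilde c_{j,k}, \gamma_{j,k}$, and write $H = F \oplus T$ with $F$ free abelian of finite rank and $T$ its torsion subgroup, which is finite since a finitely generated subgroup of $K^*$ in characteristic $p$ contains no $p$-torsion. Let $\pi : H \to F$ denote the projection. Applying $\pi$ to the displayed formula gives an integer-valued linear recurrence in $F$ that lies in the free abelian subgroup $\pi(G) \subseteq F$ for every $n$; choosing a $\mathbb{Z}$-basis of $\pi(G)$ and a $\mathbb{Z}$-linear section of the surjection $\mathbb{Z}^m \twoheadrightarrow \pi(G)$ sending $e_i \mapsto \pi(h_i)$ then produces integer linear recurrences $d_{j,i}(n)$ satisfying $\pi\bigl(f\circ\varphi^{n_0+j+n L_0}(x_0)\bigr) = \sum_i d_{j,i}(n)\,\pi(h_i)$.

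The residual $\tau_j(n) := f\circ\varphi^{n_0+j+n L_0}(x_0) \cdot \prod_i h_i^{-d_{j,i}(n)}$ lies in the finite group $T$, and by the explicit formula depends only on residues modulo $|T|$ of finitely many integer linear recurrences; hence $\tau_j$ is eventually periodic in $n$. Replacing $L_0$ by a suitable multiple $L$ makes each $\tau_j$ constant on every residue class for all $n\ge\ell$, and writing that constant value in the $h_i$ yields the desired integer linear recurrences $b_{j,i}(n)$ (after a harmless reindexing to match $f\circ\varphi^{Ln+j}(x_0)$ as in the statement). I expect the main obstacle to be this third step: cleanly converting an expression involving scalars outside of $G$ into a product of the $h_i$ with integer-linear-recurrence exponents requires a careful choice of basis of $\pi(G)$ and a section, and the torsion contribution must then be absorbed via the eventual periodicity of linear recurrences modulo $|T|$.
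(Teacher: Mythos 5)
Your proof is correct and, as far as one can tell, follows the same route the paper intends: the authors dispatch this corollary in one line by pointing to the proof of Corollary~1.3 in [BCE], and that proof is exactly the unpacking you carry out --- stabilize the orbit closure, pass to the irreducible component visited by each residue class, invoke Theorem~\ref{main theorem 2} to replace the system by a monomial self-map of a torus, read off integer linear recurrences from Cayley--Hamilton, and then rewrite everything in terms of the prescribed generators of $G$ by projecting onto the free part and cleaning up the torsion. A couple of the steps deserve slightly more care than you give them in words (ensuring that each subsequence $\{\varphi^{n_0+j+nL_0}(x_0)\}_n$ is actually Zariski dense in its component, not merely contained in it; and noting that the remark about $p$-torsion is beside the point --- finiteness of the torsion of a finitely generated abelian group is automatic), but these are routine and do not affect the argument. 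One small economy worth flagging: once you know the recurrence $\pi(a_{j,n})$ takes values in $\pi(G)$, you do not need any Fatou-type reasoning --- since the recurrence relation holds in the free group $F$ and $\pi(G)\subseteq F$ is a subgroup, the coordinates of $\pi(a_{j,n})$ in any $\Z$-basis of $\pi(G)$ automatically satisfy the same integer recurrence, and composing with your $\Z$-linear section then directly yields the $d_{j,i}(n)$.
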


As explained in Subsection~\ref{subsec:Polya}, one of the motivations for considering when a dynamical sequence $\{f(\Phi^n(\alpha))\}_{n\in\N_0}$ (for some self-map $\Phi$ on a quasiprojective variety $X$ endowed with a rational map $f:X\dashrightarrow \mathbb{P}^1$)  takes values in a finitely generated multiplicative group comes from studying $D$-finite power series. Already, this 
 question along with other related questions have  been considered in the case of self-maps of $\mathbb{P}^1$ in \cite{BOSS, BOSS2, BNZ}. Questions about $D$-finite series are of less relevance in the positive characteristic setting, but we are nevertheless able to apply our results to rational series over positive characteristic fields whose coefficients take values in a finitely generated group.  In particular, we prove a positive characteristic analogue of a classical theorem of P\'olya \cite{Pol}.
 \begin{thm}
\label{thm:D-finite_2}
Let $K$ be a field of positive characteristic, let $G\subset K^*$ be a finitely generated subgroup, and let 
$$F(x)=\sum_{n\ge 0}a_nx^n$$
be a rational power series defined over $K$, and suppose that $a_n\in G\cup\{0\}$ for all $n\ge 0$.  Then there is some $M\ge 1$ and some $N\ge 0$ such that 
$$F(x) = P(x) + \sum_{b=0}^{M-1} \mu_b x^{b+MN}/(1-\delta_b x^M),$$ where $P(x)$ is a polynomial of degree at most $MN-1$ with coefficients in $G\cup \{0\}$, $\delta_0,\ldots ,\delta_{M-1}\in G$ and $\mu_0,\ldots ,\mu_{M-1}\in G\cup \{0\}$.  
\end{thm}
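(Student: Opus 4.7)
The plan is to reinterpret $F$ as a dynamical sequence on affine space and apply our main structural results to pin down the shape of $F$. Since $F(x)$ is rational, $(a_n)_{n\ge 0}$ satisfies a $K$-linear recurrence with constant coefficients, so the standard companion-matrix construction lets me write $a_n = u^\top A^n v$ for some $A \in M_d(K)$ and $u,v \in K^d$. Setting $X = \mathbb{A}^d$, $\varphi(w) = Aw$, $x_0 = v$, and $f\colon X\to\mathbb{P}^1$ the linear form $w\mapsto u^\top w$ (composed with $\mathbb{A}^1\hookrightarrow\mathbb{P}^1$), we have $a_n = f(\varphi^n(x_0))$ and the forward orbit of $x_0$ avoids all indeterminacy. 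Theorem~\ref{main theorem} then gives $\{n : a_n\in G\}=\mathcal{P}\cup S$ with $\mathcal{P}$ a finite union of arithmetic progressions and $S$ of upper Banach density zero. Let $M$ be the least common multiple of the common differences of the progressions that make up $\mathcal{P}$.

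For each residue $b\in\{0,\ldots,M-1\}$ we are in one of two cases. In \emph{Case A}, $\mathcal{P}$ contains all sufficiently large $n\equiv b\pmod M$, so $a_{Mn+b}\in G$ for every large $n$. In \emph{Case B}, $\mathcal{P}$ meets this residue class in only finitely many elements; then, since $a_n\in G\cup\{0\}$, the subsequence $c_n:=a_{Mn+b}$ is zero outside a density-zero set. Now $(c_n)$ is itself a $K$-linear recurrence, so Derksen's theorem on zero sets of linear recurrences in positive characteristic writes $\{n : c_n=0\}$ as a finite union of arithmetic progressions together with finitely many $p$-automatic sets of density zero; density one of this zero set forces the union of arithmetic progressions to be cofinite in $\mathbb{N}_0$, and so $c_n=0$ for all sufficiently large $n$. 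Case B thus contributes only finitely many nonzero terms to $F$, all of which will be absorbed into the polynomial part.

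In Case A, after discarding a finite initial segment so that $c_n\in G$ for every $n\ge 0$, I apply Corollary~\ref{orbit} to the subsystem $((X,\varphi^M),\varphi^b(x_0))$ to obtain integers $L\ge 1$, $\ell\ge 0$ and integer-valued linear recurrences $b_{j,i}(n)$ with
\[ c_{Ln+j}=\prod_{i=1}^{m}h_i^{b_{j,i}(n)} \qquad(n\ge\ell,\ 0\le j<L). \]
The principal remaining step, and the main technical obstacle, is to upgrade this product to a geometric progression in $n$ by showing that each $b_{j,i}(n)$ is eventually affine in $n$. I plan to deduce this from the fact that $n\mapsto c_{Ln+j}$ is itself a $K$-linear recurrence: at any place $v$ of the finitely generated subfield of $K$ generated by the $h_i$ and the recurrence coefficients, the valuation of a $K$-linear recurrence grows at most linearly in $n$ outside a sparse exceptional set, and this linear bound on $v(c_{Ln+j})=\sum_i b_{j,i}(n)\,v(h_i)$, applied at sufficiently many places simultaneously, rules out any non-affine behaviour of the $\mathbb{Z}$-linear recurrences $b_{j,i}$. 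In positive characteristic this step requires care, because Derksen-type $p$-automatic pathology can in principle produce Frobenius-like exponents; the hypothesis that the coefficients lie in the \emph{finitely generated} group $G$ is what excludes these. Once affinity is established, each $a_{M'n+b'}$ with $M'=ML$ is either zero for large $n$ or equals $\mu_{b'}\delta_{b'}^n$ with $\mu_{b'}\in G\cup\{0\}$ and $\delta_{b'}\in G$. Summing the resulting geometric series on each residue class modulo $M'$ and collecting the finitely many initial exceptions into a polynomial $P(x)$ of degree less than $M'N$ for a suitable $N$ yields the decomposition asserted in Theorem~\ref{thm:D-finite_2}.
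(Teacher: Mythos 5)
Your proposal sets up the problem reasonably, but it breaks down precisely at what you yourself call the ``principal remaining step'': upgrading the integer-valued linear recurrences $b_{j,i}(n)$ coming out of Corollary~\ref{orbit} to affine functions of $n$. That step is the entire content of the theorem, and the argument you sketch for it does not hold up. You claim that the valuation of a $K$-linear recurrence ``grows at most linearly in $n$ outside a sparse exceptional set,'' but that sparse exceptional set is exactly where the positive-characteristic difficulty lives: for a pure exponential sum $c_n=\sum_i e_i\beta_i^n$ over a function field, cancellation at a place $v$ where several of the $\nu(\beta_i)$ coincide can inflate $\nu(c_n)$ in ways governed by $p$-automatic/Frobenius phenomena (this is the same mechanism behind Derksen's failure of Skolem--Mahler--Lech in char~$p$). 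Your remark that ``the hypothesis that the coefficients lie in the finitely generated group $G$ is what excludes these'' is a statement of what needs to be proved, not a proof; and even granting a linear bound on $\nu(c_{Ln+j})=\sum_i b_{j,i}(n)\nu(h_i)$ at many places, an integer-valued linear recurrence of linear growth need not be affine (e.g.\ $(-1)^n n$), so a further splitting and argument would still be required.

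The paper's proof does not route through Theorem~\ref{main theorem} or Corollary~\ref{orbit} at all. Instead it works directly with the explicit exponential form $a_n=\sum_{i,j}c_{i,j}n^j\alpha_i^n$, uses that passing to $a_{pn+k}$ kills the polynomial factors in characteristic~$p$, and reduces to showing that a pure exponential sum $\sum_{i=1}^t e_i\beta_i^n$ taking values in $G\cup\{0\}$ (with no $\beta_i/\beta_j$ a root of unity) must have $t\le 1$. The crux is handled by the Derksen--Masser structure theorem for $S$-unit equations in positive characteristic (quoted via \cite[Proposition~2.2]{BN18}), which forces $(e_1/e_2)(\beta_1/\beta_2)^n$ into a special Frobenius-twisted form, after which taking a valuation and invoking the sparsity result of \cite{GOSS} produces a contradiction on a positive-density set of $n$. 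This Derksen--Masser input is precisely the ingredient your proposal is missing; without it, or an equivalent substitute, the ``affinity of exponents'' claim is unsupported. Your Case~B analysis via Derksen's zero-set theorem is fine as far as it goes, but it only disposes of the easy residue classes.
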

This theorem was proved for integer-valued linear recurrences by P\'olya \cite{Pol} and later extended to characteristic fields (and $D$-finite series) by B\'ezivin \cite{Bez}.  The techniques applied were not, however, amenable dealing with this question in the positive characteristic setting.  We show that by applying work on $S$-unit equations due to Derksen and Masser \cite{DM12}, one can circumvent the obstacles that arise in positive characteristic.  


\subsection{The dynamical Mordell-Lang conjecture in positive characteristic}

Another motivation for our question comes from the Dynamical Mordell-Lang Conjecture (stated below as Conjecture~\ref{conj:DML_p}).
 
In the case when $G$ is the trivial group, Theorem~\ref{main theorem} was already known (see \cite{BGT}, for example), and was motivated  by a conjecture of Denis~\cite{Denis}. Furthermore, the special case when $G$ is the trivial group is a weakening of the Dynamical Mordell-Lang Conjecture in characteristic $p$ (see \cite{G-TAMS} and also \cite{DML-book} for a comprehensive discussion of the Dynamical Mordell-Lang Conjecture).
\begin{conj}
\label{conj:DML_p}
Let $X$ be a quasiprojective variety defined over a field $K$ of characteristic $p$, endowed with an endomorphism $\Phi$. Then for any point $\alpha\in X(K)$ and any subvariety $Y\subseteq X$, the set 
$$S:=\left\{n\in\N_0\colon \Phi^n(\alpha)\in Y(K)\right\}$$
is a finite union of arithmetic progressions along with finitely many sets of the form 
\begin{equation}
\label{eq:form_set}
\left\{\sum_{i=1}^r d_ip^{k_in_i}\colon n_i\in\N_0\right\},
\end{equation}
for some given $r\in\N$ and some given rational numbers $d_i$ and nonnegative integers $k_i$ (for $i=1,\dots, r$).
\end{conj}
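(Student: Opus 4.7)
The plan is to follow the classical reduction-plus-group-variety template for the Dynamical Mordell--Lang problem, replacing the Skolem--Mahler--Lech $p$-adic method (unavailable in positive characteristic) with the Hrushovski--Moosa--Scanlon--Derksen--Masser body of work on $F$-sets and $S$-unit equations over function fields.

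First I would perform the standard dynamical reductions. Replacing $X$ by the Zariski closure of $O_\Phi(\alpha)$, one may assume that $\Phi$ is dominant and the forward orbit is Zariski dense; replacing $\Phi$ by a suitable iterate and splitting $S$ according to residues modulo that iterate, one may further assume that $Y$ is irreducible. Then I would attempt to construct a $\Phi$-equivariant rational map $\Theta: X\dashrightarrow G$ to a semi-abelian variety $G$ together with a group endomorphism $\Psi:G\to G$ satisfying $\Theta\circ\Phi=\Psi\circ\Theta$, so that $S$ agrees (up to a finite union of arithmetic progressions coming from the generic fibre structure of $\Theta$) with the set $\bigl\{n\in\N_0 : \Psi^n(\Theta(\alpha))\in \overline{\Theta(Y)}\bigr\}$. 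Theorem~\ref{main theorem 2} already supplies exactly such a factorization through $\mathbb{G}_m^d$ under the additional hypothesis that some coordinate of the orbit lies in a finitely generated subgroup of $K^*$; the hope would be to extend this unconditionally via the model-theoretic structure theory of self-maps with Zariski-dense orbits in positive characteristic.

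Once the problem is transported to $(G,\Psi)$, the return set is described by polynomial equations in the coordinates of $\Psi^n(\Theta(\alpha))$, which, in positive characteristic, involve Frobenius through the eigenvalues of $\Psi$. Solving such equations inside the finitely generated subgroup of $G(K)$ generated by the $\Psi$-orbit of $\Theta(\alpha)$ falls under the Derksen--Masser theorem on $S$-unit equations over function fields and its semi-abelian extension via Moosa--Scanlon $F$-sets. The solution sets decompose into cosets of subgroups of $\Z$ (giving the arithmetic-progression part of the conclusion) together with translates of sums $\sum_{i=1}^r d_i p^{k_i n_i}$ coming from the Frobenius-generated $F$-set contributions, matching exactly the form in~\eqref{eq:form_set}; the finiteness in the Moosa--Scanlon decomposition yields the required \emph{finite} number of such exotic sets.

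The main obstacle is the reduction from the abstract system $(X,\Phi)$ to a group-variety system $(G,\Psi)$. Theorem~\ref{main theorem 2} produces such a reduction only under the strong arithmetic hypothesis that some coordinate of the orbit lies in a finitely generated subgroup of $K^*$, and no unconditional analog is presently available; meanwhile, the characteristic-zero proof strategy of Bell--Ghioca--Tucker replaces this step by a $p$-adic analytic uniformization of the orbit that has no known analog in characteristic $p$. Constructing the equivariant map $\Theta$ (or some alternative bridge to a linearly controllable system) without any \emph{a priori} arithmetic input on the orbit is precisely where all current methods stall, and is what keeps Conjecture~\ref{conj:DML_p} open in the generality stated.
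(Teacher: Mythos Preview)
The statement you are asked to prove is labeled \emph{Conjecture} in the paper, and the paper does not prove it; indeed the surrounding discussion explicitly explains that Conjecture~\ref{conj:DML_p} is open and that already the special case of group endomorphisms of $\mathbb{G}_m^N$ leads to Diophantine problems (such as $n^2=p^{m_1}+\cdots+p^{m_r}$) that are beyond current methods. So there is no ``paper's own proof'' to compare against.

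Your write-up is an honest sketch of the natural strategy and, appropriately, ends by identifying the genuine obstruction: the lack of an unconditional reduction from an arbitrary $(X,\Phi)$ to a linear/group-variety system in positive characteristic. That diagnosis is correct and matches the paper's own discussion. One point worth sharpening: even \emph{after} reducing to a group endomorphism of $\mathbb{G}_m^d$, the return set need not be describable with the existing Derksen--Masser/Moosa--Scanlon machinery in the form~\eqref{eq:form_set}. The paper makes exactly this point with the example leading to equation~\eqref{eq:hard}: the return set there is already inside a torus dynamical system, yet showing it has the conjectured shape would require solving $n^2=\sum_i p^{m_i}$, which is not covered by the $S$-unit/$F$-set theorems you cite. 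So your second step (``once the problem is transported to $(G,\Psi)$, the return set \ldots\ falls under the Derksen--Masser theorem'') is too optimistic; both the reduction step \emph{and} the endgame on the torus are currently open.
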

So, Conjecture~\ref{conj:DML_p} predicts that besides finitely many arithmetic progressions, the {return set} corresponding to the intersection of an orbit with a subvariety may also contain a set of Banach density $0$ of a very special type~\eqref{eq:form_set}. 

It is natural to ask whether the set of Banach density $0$ appearing in the {return set} $N$ from the conclusion of Theorem~\ref{main theorem} comprises finitely many sets of the same special form~\eqref{eq:form_set}. However, the Dynamical Mordell-Lang Conjecture in positive characteristic is a very difficult question, even in the special case of endomorphisms of the multiplicative group $\mathbb{G}_m^N$, in which case it reduces to some deep Diophantine questions (for more details, see \cite{CGSZ}). More precisely, using the same construction as in \cite[Section~4]{CGSZ} (especially, see \cite[Proposition~4.3]{CGSZ}), one can construct some endomorphism $\Phi$ of $\mathbb{G}_m^N$ (defined over $\F_p(t)$) along with a starting point $\alpha\in\mathbb{G}_m^N(\F_p(t))$, a suitable rational function $f:\mathbb{G}_m^N\dashrightarrow \mathbb{A}^1$, and a finitely generated subgroup $G\subset \mathbb{G}_m(\F_p(t))$ such that for some given $r\in\N$, the set 
\begin{equation}
\label{eq:return_set}
N:=\left\{n\in\N_0\colon f(\Phi^n(\alpha))\in G\right\}
\end{equation}
is precisely the set of all $n\in\N_0$ for which there exist $m_1,\dots,m_r\in\N_0$ such that 
\begin{equation}
\label{eq:hard}
n^2=p^{m_1}+p^{m_2}+\cdots + p^{m_r}.
\end{equation}
The Diophantine equation~\eqref{eq:hard} is \emph{very difficult} (already when $r\ge 5$) and all one can show with the current Diophantine methods is that  the set of all $n\in\N_0$ which satisfy an equation of the form~\eqref{eq:hard} has natural density $0$ (see \cite{GOSS}). Obtaining the precise description of the return set $N$ from \eqref{eq:return_set} as a finite union of sets of the form~\eqref{eq:form_set} is beyond the known results available in the literature. 

The fact that the problem in positive characteristic turns out to be more subtle than the corresponding question in characteristic $0$ is encountered in many similar questions in arithmetic geometry (such as the classical Mordell-Lang conjecture, see \cite{Hrushovski, Rahim-Tom} for the corresponding results in  characteristic $p$) or arithmetic dynamics (such as the Zariski dense orbit conjecture in positive characteristic, see \cite{G-Sina}).  So, we find it interesting that the results of \cite{BCE}, which involve a language akin to both the Dynamical Mordell-Lang Conjecture and to the classical Mordell-Lang conjecture, hold with suitable modifications over fields of positive characteristic.


\subsection{Plan of our paper}
\label{subsec:plan}

The proofs of our results follow closely the strategy from \cite{BCE}; the only significant difference between the two arguments appears in the proof of Lemma~\ref{dichotomy}. In Section~\ref{sec:proof} we explain the differences from the arguments of~\cite{BCE}, including the proof of Lemma~\ref{dichotomy}, in order to derive the proof for our results in positive characteristic. We conclude with presenting several applications (including Theorem~\ref{thm:D-finite_2}) of our results in Section~\ref{sec:applications}.


\section{Proof of our results}
\label{sec:proof}

The strategy of proof follows the arguments employed in \cite{BCE}. We proceed by describing the differences appearing in each section of the proof from \cite{BCE} when working in positive characteristic.


\subsection{Linear sequences in abelian groups}
\label{subsec:linear}

The contents of \cite[Section~2]{BCE} provide a general background for sequences in abelian groups and the corresponding results also hold with identical proofs in the case when the relevant rings have positive characteristic. 

For example, the translation in positive characteristic of   \cite[Proposition~2.9]{BCE} asserts the following result: given a field $K$ which is a finitely generated extension of $\F_p$ and given a sequence $\{u_n\}_{n\in\N_0}\subset K^*$ satisfying a multiplicative $\N_0$-quasilinear recurrence (see \cite[Definition~2.1]{BCE}), then $\{u_n\}_{n\in\N_0}$ satisfies a multiplicative linear recurrence and moreover,  if $H$ is a finitely generated subgroup of $K^*$, then $\{n\in\N_0\colon u_n\in H\}$ is eventually periodic. Indeed, working with a field $K$ which is finitely generated over $\F_p$, one views $K$ as a finite extension of some rational function field $\F_p(t_1,\dots, t_m)$. So, considering $R:=\F_p[G,t_1,\dots, t_m]$, the finitely generated ring spanned by the $t_i$'s and the elements of the finitely generated subgroup $G\subset K^*$, its integral closure $\overline{R}$ inside $K$ is also a finitely generated $\F_p$-algebra, according to \cite[Corollary~13.13]{Eisenbud}. Then the group of units of $\overline{R}^*$ is again finitely generated (by \cite{Roquette}) and the rest of the proof of \cite[Proposition~2.9]{BCE} follows identically for such fields $K$ of characteristic $p$.


\subsection{Multiplicative dependence of certain rational functions}

The goal of \cite[Section~3]{BCE} is to convert the statement of \cite[Theorem~1.2]{BCE} into a problem about linear recurrence sequences as developed in \cite[Section~2]{BCE}. Since the results of \cite[Section~2]{BCE} hold in positive characteristic (as explained in Section~\ref{subsec:linear}), then our goal is to see how the proofs of \cite[Section~3]{BCE} can be changed so that the main results would hold for fields of characteristic $p$. We prove below the characteristic $p$ variant of \cite[Lemma~3.2]{BCE}, which requires a slightly different argument in order to eliminate the use of the $S$-unit equation in characteristic $0$.

\begin{lemma}
\label{dichotomy}
Let $K$ be an algebraically closed field of characteristic $p$ with transcendence degree $e<\infty$ over $\mathbb{F}_p$, let $G$ be a finitely generated multiplicative subgroup of $K^{*}$ of rank $r$, let $X$ be an irreducible quasiprojective variety over $K$ of dimension $d$, and let $g_0,\ldots,g_{N-1}\in K(X)$ be $L$ rational functions on $X$ with $N=(d+e+1)^r$. If 
\begin{equation}
\label{eq:def_X_g}
X_G:= X_G(g_0,\ldots,g_{N-1}):=\left\{x\in X(K)\colon g_0(x),g_1(x),\dots,g_{N-1}(x)\in G \right\} 
\end{equation}
is Zariski dense in $X$, then $g_0,\ldots,g_{N-1}$ are multiplicatively dependent.
\end{lemma}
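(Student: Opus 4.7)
My plan is to argue by contradiction: suppose $g_0,\dots,g_{N-1}$ are multiplicatively independent in $K(X)^{*}/K^{*}$. I will form the rational map $\Phi=(g_0,\dots,g_{N-1})\colon X\dashrightarrow \mathbb{G}_m^N$ and let $Y$ be the Zariski closure of its image, so $\dim Y\le d$ and the multiplicative independence of the $g_i$ forces $Y$ not to be contained in any proper translate of a subtorus of $\mathbb{G}_m^N$. Zariski density of $X_G$ in $X$ then gives that $Y\cap G^N$ is Zariski dense in $Y$, and the task is to contradict this configuration.

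In the characteristic~$0$ setting of \cite{BCE}, this is the point at which one invokes an $S$-unit equation (Evertse--Schlickewei--Schmidt) to force $Y$ into a proper subtorus coset. In characteristic $p$ this route is unavailable because of Frobenius-twisted families of solutions, so my plan is to replace the $S$-unit input by an algebraic-dependence-plus-pigeonhole mechanism. The algebraic input is that $\operatorname{tr.deg}_{\F_p}K(Y)\le d+e$, whence every $d+e+1$-tuple of coordinate functions on $Y$ satisfies a nontrivial $\F_p$-polynomial relation; evaluating at any $x\in X_G$ turns such a relation into a vanishing sum of $G$-valued monomials in $g_{i_0}(x),\dots,g_{i_{d+e}}(x)$.

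The bound $N=(d+e+1)^r$ is precisely tuned to feed into a pigeonhole in $r$ stages, one per generator of $G/G_{\mathrm{tors}}$. Fixing generators $h_1,\dots,h_r$ and writing $g_i(x)=\zeta_i(x)\prod_j h_j^{a_{i,j}(x)}$ with $\zeta_i(x)\in G_{\mathrm{tors}}$, I would, for each $j=1,\dots,r$, partition the surviving indices into $d+e+1$ bins according to a congruence on $a_{i,j}(x)$; after $r$ such rounds a subfamily of $d+e+1$ indices remains whose exponent vectors coincide modulo a common sublattice. Combining this congruence information with the $\F_p$-algebraic relation satisfied by the associated $d+e+1$ coordinate functions forces two distinct monomials in the relation to agree as elements of $K(X)^{*}/K^{*}$, producing a nontrivial multiplicative relation among the $g_i$ and contradicting the standing hypothesis.

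The hardest part, and the place where my plan genuinely differs from \cite{BCE}, is orchestrating the pigeonhole coherently across $x\in X_G$: the exponents $a_{i,j}(x)$ depend on $x$, so I expect to need a further pigeonhole over $x\in X_G$ to pin down one common $\F_p$-algebraic relation and one common residue pattern that are valid on a Zariski dense subset of $X_G$. Once such a uniformly valid choice is secured, the ``collision of monomials'' becomes an identity in $K(X)$ rather than a pointwise coincidence, and the contradiction with multiplicative independence is immediate.
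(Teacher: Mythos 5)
Your skeleton overlaps with the paper's in two respects: you exploit $\operatorname{tr.deg}_{\F_p}K(X)=d+e$ to get a nontrivial $\F_p$-polynomial relation among each block of $d+e+1$ of the $g_i$, and you plan an $r$-stage reduction indexed by the rank of $G$, with a further pigeonhole over $x\in X_G$ to make one relation/pattern valid on a Zariski dense subset (the paper does exactly this, pigeonholing over finitely many index pairs). But the engine of your reduction is the step that is actually missing. The paper's mechanism is valuation-theoretic: fix a rank-one discrete valuation $\nu$ that is nontrivial on $G$, normalized so that $\nu|_G\colon G\to\Z$ is surjective with kernel $G_0$ of rank $r-1$; in the vanishing sum $\sum_\alpha c_{\alpha,j}f_{\alpha,j}(y)=0$ at a point $y\in X_G$, the ultrametric inequality forces \emph{two distinct terms to have equal (minimal) valuation}, so some ratio $f_{\alpha_j,j}(y)/f_{\beta_j,j}(y)$ lands in the smaller group $G_0$; after pigeonholing over the finitely many pairs, the $(d+e+1)^{r-1}$ ratios $t_j=f_{\alpha_j,j}/f_{\beta_j,j}$ have $X_{G_0}(t_0,\ldots)$ Zariski dense, and induction on the rank finishes (the base case $r=0$ forces a single function with dense $G$-valued locus to be a torsion constant). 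Your replacement step --- binning indices by congruences on the exponents $a_{i,j}(x)$ and asserting that exponent vectors agreeing modulo a sublattice, combined with the $\F_p$-relation, ``force two distinct monomials in the relation to agree as elements of $K(X)^{*}/K^{*}$'' --- is not justified and I do not see a mechanism for it: a vanishing $\F_p$-linear combination of monomials gives no reason for two of its terms to be equal or proportional; after choosing a valuation it only forces two terms to have equal valuation, which is a strictly weaker conclusion and is precisely what drives the rank-lowering induction.

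There are two further concrete problems. First, the arithmetic of your pigeonhole does not match the hypothesis: $r$ rounds of keeping one of $d+e+1$ bins, starting from $(d+e+1)^r$ indices, leaves a single index, not a subfamily of $d+e+1$; and since the exponents $a_{i,j}(x)$ range over all of $\Z$, finitely many congruence classes can never pin the exponent vectors down exactly, so the ``coincidence modulo a sublattice'' you end with cannot be upgraded to the exact collision your endgame needs. Second, the congruence data is never actually coupled to the vanishing sum: nothing in a relation $\sum c_\alpha f_\alpha(y)=0$ interacts with residues of exponents modulo an auxiliary integer. To repair the argument, replace the congruence pigeonhole by the valuation step above and run the induction on the rank of $G$; the factor $d+e+1$ in $N=(d+e+1)^r$ is consumed one power per unit of rank by the block-wise algebraic dependence, not by a residue pigeonhole.
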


\begin{proof} We prove this by induction on $r$.  If $r=0$ then $G$ is a finitely generated torsion subgroup of $K(X)$ and hence is finite.  It follows that if $g_0\in K(X)$ has the property that $X_G(g_0)$ is Zariski dense then $g_0$ must be constant since $X$ is irreducible; moreover, this constant must be in the torsion group $G$ and so $g_0^m=1$ for some $m$, giving us the result in this case.  

Now we assume that the results holds whenever $r<s$ with $s\ge 1$ and we consider the case when $r=s$.
Let $L=(d+e+1)^{s-1}$.  

We fix a rank one discrete valuation $\nu$ of $K(X)$ such that $\nu$ is not identically zero on $G$ (note that by our assumption, not all elements of $G$ are contained in $\overline{\F_p}$ because $G$ has positive rank).  Then after renormalizing, we may assume that $\nu|_G : G\to \Z$ is surjective and we let $G_0$ denote the kernel, which is a finitely generated subgroup of $G$ of rank $s-1$.    

Since the field extension $K(X)/\mathbb{F}_p$ has transcendence degree $d+e$, then for each $j=0,\ldots ,L-1$, the functions $f_{j,0}:=g_{j(d+e+1)},\ldots,f_{j,d+e}:=g_{j(d+e+1)+d+e}$ are algebraically dependent over $\mathbb{F}_p$. Thus there is a nontrivial polynomial relation
\[ \sum_{i_0,\ldots,i_{d+e}} c_{i_0\cdots i_{d+e},j}f_{j,0}^{i_0}\cdots f_{j,d+e}^{i_{d+e}}=0 \]
where $c_{i_0\cdots i_{d+e},j}\in \mathbb{F}_p$ and the sum is over a finite set of indices in $\N_0^{d+e+1}$; this holds on some open subset of $X$. For each $j\in\{0,\ldots ,L-1\}$, we let $I_j$ be the (finite) set of indices $\alpha=(i_0,\ldots,i_{d+e})\in \N_0^{d+e+1}$ where $c_{i_0\cdots i_{d+e},j}$ is nonzero. For $\gamma=(i_0,\ldots,i_{d+e})\in \Z^{d+e+1}$, we set
\[{f_{\gamma,j}:=f_{j,0}^{i_0}\cdots f_{j,{d+e}}^{i_{d+e}} \quad \text{and} \quad c_{\gamma,j} := c_{i_0\cdots i_{d+e},j}. }\]
Then for every $y\in X_G$ and each $j\in \{0,\ldots ,L-1\}$, there are two distinct elements $\alpha_j, \beta_j$ in the $|I_j|$-tuple $(c_{\alpha,j} f_{\alpha,j}(y))_{\alpha\in I_j}$ having the same valuation under $\nu$.
Given $\alpha_0,\ldots ,\alpha_{L-1}$, $\beta_0,\ldots ,\beta_{L-1}$ with each $\alpha_j,\beta_j\in I_j$ distinct, we let $X_{G,(\alpha_0,\ldots, \alpha_{L-1};\beta_0,\ldots, \beta_{L-1})}$ denote the set of points $y\in X_G$
such that $\nu(f_{\alpha_j,j}(y)/f_{\beta_j,j}(y))=0$. Then there is some element 
$$((\alpha_0,\beta_0),\cdots , (\alpha_{L-1},\beta_{L-1}))\in \prod_{j=0}^{L-1} I_j^2$$ 
with $\alpha_j\neq\beta_j$ for each $j$ such that $Z:=X_{G,(\alpha_0,\ldots, \alpha_{L-1};\beta_0,\ldots, \beta_{L-1})}$ is Zariski dense in $X$.  
Now let $t_j: =f_{\alpha_j,j}/f_{\beta_j,j}$ for $j=0,\ldots ,L-1$.  Then by construction, 
$X_{G_0}(t_0,\ldots ,t_{L-1}) \supseteq Z$ and thus is Zariski dense.  Then since $G_0$ has rank $s-1$, the induction hypothesis gives that $t_0,\ldots ,t_{L-1}$ are multiplicatively dependent.  A multiplicative dependence between these functions then gives a dependence between $g_0,\ldots , g_{L(d+e+1)}$, and so the result follows by induction.
\end{proof}


\subsection{Conclusion of our proof}

Then the proof of Theorem~\ref{main theorem 2} follows identically as the proof of \cite[Theorem~1.2]{BCE} follows from \cite[Section~3]{BCE}; moreover, one obtains the following more general statement than Theorem~\ref{main theorem 2} working with finitely many self-maps of $X$. 

\begin{thm}
\label{cor:semi}
Let $K$ be an algebraically closed field of positive characteristic, let $G$ be a finitely generated subgroup of $K^*$, let $X$ be a quasiprojective variety over $K$, let $\varphi_1,\ldots ,\varphi_m$ be dominant rational self-maps of $X$ and we let $S$ denote the monoid generated by these maps under composition. We let $f:X\dashrightarrow \mathbb{P}^1$ be a non-constant rational map and assume that $x_0\in X(K)$ has the property that its forward orbit under $S$ is Zariski dense and each point avoids the indeterminacy loci of the maps $\varphi_1,\ldots ,\varphi_m$ and $f$.

If {$u_{\varphi} = f(\varphi(x_0))\in G$} for every $\varphi$ in the monoid $S$   then {there exists} a rational map $\Theta:X\dashrightarrow \mathbb{G}_m^d$ {with $d\leq {\rm dim}(X)$} that is defined at each point in {$O_{\varphi}(x_0) = (\varphi(x_0))_{\varphi \in S}$} and endomorphisms $\Phi_1,\ldots ,\Phi_m:\mathbb{G}_m^d\rightarrow \mathbb{G}_m^d$ such that the following diagram commutes
\begin{center}
    \begin{tikzpicture}[node distance=2.7cm, auto]
        \node (X1) at (0,0) {$X$};
        \node (X2) at (3,0) {$X$};
        \node (Ad1) at (0,-2) {$\mathbb{G}_m^d$};
        \node (Ad2) at (3,-2) {$\mathbb{G}_m^d.$};

        \draw[->,dashed] (X1) to node  {$\varphi_1,\ldots ,\varphi_m$} (X2);
        \draw[->,dashed] (X1) to node [swap]{$\Theta$} (Ad1);
        \draw[->,dashed] (X2) to node {$\Theta$} (Ad2);
        \draw[->] (Ad1) to node [swap] {$\Phi_1,\ldots ,\Phi_m$} (Ad2);
    \end{tikzpicture}
\end{center}
\end{thm}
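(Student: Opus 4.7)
The plan is to run the reduction of \cite[Section~3]{BCE} in the semigroup setting, making essentially a single substitution: wherever the characteristic zero argument of \cite{BCE} invokes an $S$-unit-equation input to force multiplicative dependence among rational functions on $X$, I would instead invoke Lemma~\ref{dichotomy}. The overall strategy is to extract from the family $\mathcal{F}:=\{f\circ\varphi:\varphi\in S\}\subset K(X)^*$ a finite set of rational functions that freely generate (modulo constants) the multiplicative subgroup they span; these generators then serve as the coordinates of $\Theta$, and their transformation rule under precomposition by each $\varphi_i$ yields the endomorphism $\Phi_i$.

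First I would descend to an algebraically closed subfield of $K$ of finite transcendence degree $e$ over $\F_p$ over which $X$, $f$, $\varphi_1,\ldots,\varphi_m$, the point $x_0$, and a generating set of $G$ are all defined, so that Lemma~\ref{dichotomy} applies verbatim. Note that $\mathcal{F}$ is closed under each of the operations $h\mapsto h\circ\varphi_i$ (since $(f\circ\psi)\circ\varphi_i = f\circ(\psi\circ\varphi_i)\in \mathcal{F}$), and that every $h\in\mathcal{F}$ evaluates into $G$ at every point of the Zariski dense orbit $O_S(x_0)$. In particular, for any $N:=(\dim X+e+1)^{\mathrm{rank}(G)}$ functions chosen from $\mathcal{F}$, the set $X_G$ from \eqref{eq:def_X_g} contains $O_S(x_0)$ and is therefore Zariski dense, so Lemma~\ref{dichotomy} yields a nontrivial multiplicative relation among them. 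The elementary observation that a torsion-free abelian group in which every $N$-element subset is dependent has rank at most $N-1$ then bounds the rank of the subgroup $A\subset K(X)^*$ generated by $\mathcal{F}$ modulo torsion; since $K$ is finitely generated over $\F_p$ its roots of unity form a finite group, and so $A$ itself is finitely generated.

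I would then enlarge $G$ to absorb a finite generating set of $A\cap K^*$, pick $h_1,\ldots,h_d\in A$ whose classes freely generate $A/(A\cap K^*)$, and set $\Theta:=(h_1,\ldots,h_d)\colon X\dashrightarrow \mathbb{G}_m^d$. Each $h_j$ is a product of elements of $\mathcal{F}$ and their inverses, all of which take nonzero values in $G$ on the orbit, so $\Theta$ is defined at every point of $O_S(x_0)$. Since $A$ is stable under $h\mapsto h\circ\varphi_i$, each $h_j\circ\varphi_i$ admits a unique expression
\[ h_j\circ \varphi_i \;=\; c_{i,j}\cdot h_1^{a_{i,j,1}}\cdots h_d^{a_{i,j,d}} \qquad (c_{i,j}\in K^*,\ a_{i,j,k}\in\Z), \]
and these expressions collectively define the endomorphism $\Phi_i(t_1,\ldots,t_d)_j := c_{i,j}\prod_k t_k^{a_{i,j,k}}$ of $\mathbb{G}_m^d$, so that $\Theta\circ\varphi_i = \Phi_i\circ\Theta$ holds tautologically.

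The principal obstacle I expect is not in the monomial bookkeeping of the final step but in the finite generation of $A$: Lemma~\ref{dichotomy} supplies only a local dependence relation among any $N$-tuple of elements of $\mathcal{F}$, and the rank-bound argument needed to globalize this into finite generation must be applied consistently across all such tuples. A secondary subtlety, whose resolution follows the same template as \cite[Section~3]{BCE}, is obtaining the sharper bound $d \le \dim X$ in the conclusion: the crude rank estimate $N-1$ produced above is far larger than $\dim X$, so one must refine the choice of multiplicative generators by exploiting the algebraic dependencies (again forced by the Zariski density of the orbit) in order to cut the ambient torus down to dimension at most $\dim X$.
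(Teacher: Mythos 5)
Your plan tracks the paper's: run the reduction of \cite[Section~3]{BCE} with Lemma~\ref{dichotomy} supplying the multiplicative dependence in place of the characteristic-zero $S$-unit input, build $\Theta$ from a finite set of multiplicative generators of the group $A$ generated by $\mathcal{F}=\{f\circ\varphi:\varphi\in S\}$, and read off the endomorphisms $\Phi_i$ from the transformation law $h_j\circ\varphi_i=c_{i,j}\prod_k h_k^{a_{i,j,k}}$. The first half of your argument is right: $O_S(x_0)\subseteq X_G(g_0,\ldots,g_{N-1})$ for any $N$-tuple drawn from $\mathcal{F}$, so Lemma~\ref{dichotomy} applies, and the observation that a generating set in which every $N$-subset is multiplicatively dependent forces rank at most $N-1$ bounds the rank of $A$ modulo torsion.

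There are two genuine gaps. First, your justification that $A$ is finitely generated is incorrect as written: you keep $K$ algebraically closed (as Lemma~\ref{dichotomy} requires) and then assert ``since $K$ is finitely generated over $\F_p$ its roots of unity form a finite group,'' which fails for algebraically closed $K$, whose roots of unity are infinite. The correct route is readily available: any constant $c\in A\cap K^*$ is a finite product $\prod(f\circ\psi_j)^{n_j}$ and hence equals its value at any orbit point, which lies in $G$; thus $A\cap K^*\subseteq G$ is finitely generated, while $A/(A\cap K^*)$ embeds in $K(X)^*/K^*$, which is free abelian (via the divisor map on a normal projective model of $X$), so your rank bound makes it free of finite rank, and $A$ is finitely generated. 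Note also that once $A\cap K^*\subseteq G$, no enlargement of $G$ is needed. Second, the stated bound $d\le\dim(X)$ is simply deferred: you call it a ``secondary subtlety'' whose resolution ``follows the same template as \cite[Section~3]{BCE},'' but the paper itself emphasizes that Laurent's theorem --- which in characteristic zero forces the Zariski closure of $\Theta(O_S(x_0))$ to be a translate of a subtorus of dimension at most $\dim X$, allowing one to cut down the ambient torus --- is unavailable in positive characteristic, where only the weaker Hrushovski/Moosa--Scanlon structure is at hand. So this step is not handled by the characteristic-zero template and needs an actual argument before the proof is complete.
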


Theorem~\ref{main theorem 2} is a special case of Theorem~\ref{cor:semi} when the semigroup $S$ from Theorem~\ref{cor:semi} is cyclic.

The main difference between Theorem~\ref{cor:semi} as opposed to its counterpart in  characteristic $0$ (which is \cite[Corollary~3.5]{BCE}) consists in the fact that the map $\Theta$ appearing in the conclusion from Theorem~\ref{cor:semi} is no longer dominant. The reason for this is that in the final part of the proof of \cite[Corollary~3.5]{BCE}, one employs Laurent's classical result \cite{Laurent} for the Mordell-Lang conjecture for the multiplicative group to infer that an irreducible subvariety of an algebraic torus must be itself a translate of a subtorus if it contains a Zariski dense set of points from a given finitely generated subgroup of the torus; this conclusion does not hold in positive characteristic. All one obtains, with the notation as in Theorem~\ref{cor:semi} is that the image of $\Theta$ is a translate of a subvariety $Y\subseteq \mathbb{G}_m^d$ defined over a finite field (as per Hrushovski's proof \cite{Hrushovski} of the Mordell-Lang conjecture in positive characteristic; see also \cite{Rahim-Tom}). 

Finally, similar to the proof of \cite[Corollary~1.3]{BCE}, one derives the conclusion of Corollary~\ref{orbit}; then the proof of Theorem~\ref{main theorem} follows verbatim as the proof of \cite[Theorem~1.1]{BCE} from \cite[Section~4]{BCE}.


\section{Applications of our results}
\label{sec:applications}

The applications from \cite[Sections~5~and~6]{BCE} hold with almost identical proofs also in the case of fields of positive characteristic. We state below applications in a couple different directions.


\subsection{Heights}
\label{subsec:heights}

In \cite{BGS} and \cite{BFS}, a broad dynamical framework giving rise to many
classical sequences from number theory and algebraic combinatorics is developed. This is accomplished by considering \emph{dynamical sequences}, which are sequences of the 
form $ f\circ \varphi^n(x_0)$, where $(X,\varphi)$ is a rational
dynamical system, $f:X\dashrightarrow \mathbb{P}^1$ is a rational map, and $x_0\in X$. Similar to \cite{BGS, BFS}, Corollary~\ref{orbit} provides an interesting ``gap'' about heights of points in the forward orbit of a self-map $\varphi$ for varieties and maps defined over a function field over a finite field. 

So, for a finitely generated field $K$ over its prime field $\F_p$, we view $K$ as the function field of a projective geometrically irreducible variety $\mathcal{V}$ defined over a finite field $\F_q$; at the expense of replacing $K$ by a finite extension, we may assume $\mathcal{V}$ is smooth \cite[Remark 4.2]{DeJ96}. We let $\Omega_{\mathcal{V}}$ be the set of inequivalent absolute values corresponding to the irreducible divisors of $\mathcal{V}$. Then one can construct the Weil height $h(\cdot ):=h_{\mathcal{V}}(\cdot )$ for the points in $K$ corresponding to the places in $\Omega_{\mathcal{V}}$; for further background on height functions, we refer the reader to \cite[Chapter~2]{BG06} and \cite[Chapter~3]{GTM241}. 

The following result is a positive characteristic analogue of \cite[Theorem~5.1]{BCE}.
\begin{thm}
Let $K$ be a finitely generated extension of $\F_p$ and let $X$ be a  quasiprojective variety defined over $K$, endowed with a dominant self-map $\varphi:X\dashrightarrow X$ along with a rational map    $f:{X}\dashrightarrow \mathbb{P}^1$, both defined also over $K$.  Suppose that $x_0\in X(K)$ has the following properties:
\begin{enumerate}
\item every point in the orbit of $x_0$ under $\varphi$ avoids the indeterminacy loci of $\varphi$ and $f$;
\item there is a finitely generated multiplicative subgroup $G$ of $K^*$ such that $f\circ \varphi^n(x_0)\in G$ for every $n\in \mathbb{N}_0$.
\end{enumerate}
If $h(f\circ \varphi^n(x_0)) = {\rm o}(n^2)$ then the sequence
$(f\circ \varphi^n(x_0))_n$ satisfies a linear recurrence.  More precisely, there exists an integer $L\ge 1$ such that for each $j\in \{0,\ldots, L-1\}$ there are $\alpha_j,\beta_j\in G$ such that for all $n$ sufficiently large we have
$$f\circ \varphi^{Ln+j}(x_0) = \alpha_j \beta_j^n.$$
\label{orbit2}
\end{thm}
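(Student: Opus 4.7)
The plan is to combine Corollary~\ref{orbit} with the classical structure theorem for integer-valued linear recurrences of polynomial growth, using the Weil height to translate the hypothesis $h(f\circ\varphi^n(x_0))=o(n^2)$ into a polynomial growth condition on the exponents supplied by Corollary~\ref{orbit}. Applying that corollary, there exist integers $L_0\ge 1$ and $\ell\ge 0$ and integer-valued linear recurrences $b_{j,i}(n)$ (for $j\in\{0,\ldots,L_0-1\}$ and $i\in\{1,\ldots,m\}$, where $h_1,\ldots,h_m$ generate $G$) with
\[ f\circ\varphi^{L_0n+j}(x_0)=\prod_{i=1}^m h_i^{b_{j,i}(n)} \]
for all $n\ge\ell$. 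The problem thus reduces to showing that, after possibly enlarging $L_0$, each exponent $b_{j,i}(n)$ may be replaced by an affine function of $n$.

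First I would change basis on $G$. Since $K$ is the function field of a smooth projective geometrically irreducible variety over $\F_q$, the torsion subgroup $G_\mathrm{tors}:=G\cap\overline{\F_q}^{\,*}$ is finite and $G/G_\mathrm{tors}$ is free abelian of some rank $r$; fix a $\Z$-basis $e_1,\ldots,e_r$ and write each generator as $h_i=\zeta_i\prod_k e_k^{m_{i,k}}$ with $\zeta_i\in G_\mathrm{tors}$. Then
\[ f\circ\varphi^{L_0n+j}(x_0)=\eta_j(n)\cdot\prod_{k=1}^r e_k^{c_{j,k}(n)}, \]
where $\eta_j(n):=\prod_i\zeta_i^{b_{j,i}(n)}\in G_\mathrm{tors}$ and $c_{j,k}(n):=\sum_i m_{i,k}b_{j,i}(n)$ is again an integer-valued linear recurrence. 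Because $G_\mathrm{tors}$ is finite, each sequence $\eta_j(n)$ is eventually periodic, so after replacing $L_0$ by a suitable multiple I may assume that $\eta_j(n)=\eta_j\in G_\mathrm{tors}$ is constant for $n\ge\ell$.

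Next I would feed in the height hypothesis. The Weil height $h=h_\mathcal{V}$ vanishes precisely on $\overline{\F_q}^{\,*}$, hence induces a norm on the finite-dimensional real vector space $(G/G_\mathrm{tors})\otimes\R\cong\R^r$; by equivalence of norms there is a constant $C>0$ with $h\bigl(\prod_k e_k^{c_k}\bigr)\ge C\max_k|c_k|$ for every $(c_k)\in\Z^r$. Combined with $h(f\circ\varphi^n(x_0))=o(n^2)$ along the arithmetic progression $n\mapsto L_0 n+j$, this yields $\max_k|c_{j,k}(n)|=o(n^2)$ for each $j$.

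Finally I would invoke the structure theorem for $\Z$-valued linear recurrences of polynomial growth: writing $c_{j,k}(n)=\sum_\nu P_\nu(n)\lambda_\nu^n$ in polynomial-exponential form, the bound $|c_{j,k}(n)|=o(n^2)$ forces every characteristic root $\lambda_\nu$ to be a root of unity, since any other $\lambda_\nu$ would produce a term dominating along a subsequence and incompatible with both the integrality and the polynomial growth of $c_{j,k}(n)$. Enlarging $L_0$ once more by a multiple of the common order of these roots of unity, each $c_{j,k}(n)$ becomes an honest polynomial in $n$ on each new arithmetic progression, and the $o(n^2)$ bound then forces this polynomial to have degree at most one, say $c_{j,k}(n)=\gamma_{j,k}+\delta_{j,k}n$. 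Setting $\alpha_j:=\eta_j\prod_k e_k^{\gamma_{j,k}}\in G$ and $\beta_j:=\prod_k e_k^{\delta_{j,k}}\in G$ then gives $f\circ\varphi^{Ln+j}(x_0)=\alpha_j\beta_j^n$ for all sufficiently large $n$, as desired. The main obstacle is this last step of pinning down the characteristic roots as roots of unity, but it is a standard consequence of the rational-generating-function description of integer linear recurrences and is unaffected by the positive characteristic of $K$, since the exponents themselves live in $\Z$.
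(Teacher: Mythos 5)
Your proposal follows essentially the same route as the paper: identify the positive-characteristic analogue of Schlickewei's height lower bound as the one new ingredient, and then run the argument of \cite[Theorem~5.1]{BCE} (reduce via Corollary~\ref{orbit} to integer-valued linear-recurrence exponents, feed in the height bound to control their growth, and invoke the structure of polynomially-bounded $\Z$-valued linear recurrences to force the exponents to be affine on a suitable arithmetic progression). The one place where you diverge slightly from the paper is the derivation of the height lower bound $h\bigl(\prod_k e_k^{c_k}\bigr)\ge C\max_k|c_k|$: you appeal to equivalence of norms on $(G/G_{\mathrm{tors}})\otimes\R$, whereas the paper constructs explicit places $v_1,\dots,v_r$ with $|g_i|_{v_i}>1$ and $|g_j|_{v_i}=1$ for $j\ne i$ and reads off the bound directly. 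Your version is fine but a touch underargued: to know that the Weil height, a priori only a subadditive homogeneous function on the lattice $G/G_{\mathrm{tors}}$, extends to a genuine norm (in particular is positive definite) on the real vector space, one needs the Northcott property for function-field heights; the paper's explicit construction with places sidesteps this. Both are standard and yield the same inequality, so the overall proof is sound and matches the paper's.
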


\begin{proof}
The arguments are almost identical with the ones employed in the proof of \cite[Theorem~5.1]{BCE}. For example, the proof of \cite[Theorem~5.1]{BCE} uses a classical result of Schlickewei \cite[Theorem 1.1]{Schlick} which gives that for a finitely generated subgroup $H\subset \overline{\Q}^*$, there exist $g_1,\ldots ,g_d\in H$ such that {every element} of $H$ can be expressed uniquely in the form
$\zeta g_1^{n_1}\cdots g_d^{n_d}$ with $\zeta$ {being a root} of unity and $n_1,\ldots ,n_d\in \mathbb{Z}$ and such that we have the following lower bound for the Weil height of the points in $H$:
\begin{equation}
\label{eq:Schlick}
h(\zeta g_1^{n_1}\cdots g_d^{n_d})\ge \max_{1\le i\le d} \{|n_i| 4^{-d} h(g_i)\}.
\end{equation}
An analogue of this result can be easily derived for function fields $K$ as in our setting. Indeed, viewing (as before) $K$ as the function field of a smooth projective geometrically irreducible variety $\mathcal{V}$ over some finite field $\F_q$, we let $\Omega_{\mathcal{V}}$ the set of inequivalent absolute values associated with the irreducible divisors of $\mathcal{V}$. Then letting $r$ be the rank of the  torsion-free subgroup $G_0$ of $G$, we can find suitable generators $g_1,\dots, g_r$ of $G_0$ along with places $v_1,\dots, v_r$ of $K$ (corresponding to irreducible divisors of $\mathcal{V}$), so that the following properties are satisfied by the absolute values $|\cdot|_{v_i}\in\Omega_{\mathcal{V}}$:
\begin{itemize}
\item[(i)] $\left|g_i\right|_{v_i}>1$ for $i=1,\dots, r$; and 
\item[(ii)] $\left|g_j\right|_{v_i}=0$ for each $i\ne j$. 
\end{itemize}
(We point out that such places can be produced inductively, using row reduction to obtain generators for which (i) and (ii) hold.)
Conditions~(i)-(ii) guarantee that for any integers $n_1,\dots, n_r$, we have that the Weil height $h:K\longrightarrow \mathbb{R}_{\ge 0}$ satisfies
$$h\left(\prod_{i=1}^r g_i^{n_i}\right)>c\cdot \max_{i=1}^r |n_i|,$$
for some positive constant $c$ depending only on the places $v_1,\dots ,v_r$ and the points $g_1,\dots, g_r$. This allows us to apply the same arguments as in the proof of \cite[Theorem~5.1]{BCE} to conclude our proof of Theorem~\ref{orbit2}.
\end{proof}


\subsection{A theorem of P\'olya in positive characteristic}
\label{subsec:Polya}

The class of dynamical sequences includes all sequences whose generating functions are $D$-finite, i.e., those satisfying homogeneous linear differential
equations with rational function coefficients (see also \cite[Definition~6.1]{BCE}).  This is an important class of power series since it
appears ubiquitously in algebra, combinatorics, and number theory; we refer the reader to \cite[Sections~1~and~6]{BCE} for a comprehensive discussion of the $D$-finite power series and their applications.  In positive characteristic, however, the class of $D$-finite power series is considerably less interesting due to the fact that the derivations are nilpotent.  

We notice that the study of power series with coefficients in a finitely generated subgroup $G$ of the multiplicative group of
a field enjoys a long history, going back at least to the early 1920s, with the pioneering work of P\'olya \cite{Pol},
who characterized rational functions whose Taylor expansions at the origin have coefficients lying in a finitely generated multiplicative subgroup of $\Z$.
P\'olya's results were later extended to $D$-finite power series by B\'ezivin \cite{Bez}, although B\'ezivin's work was necessarily done in characteristic zero for the reasons stated in the preceding paragraph.  

Many of the results of \cite[Section~6]{BCE} go through with minimal changes when working over a field of positive characteristic, but in the case of $D$-finite series what one cannot guarantee is that such a sequence arises as a dynamical system.  In cases where this can be done, there are no additional obstacles when working in positive characteristic.  In particular, if one considers rational power series $F(x)=\sum a_n x^n$, then one has $a_n=w^t A^n v$, where $A$ is an invertible matrix and $w,v$ are column vectors, for all $n$ sufficiently large; and so a tail of the sequence $a_n$ can be realized as a dynamical sequence, even in positive characteristic.  In particular, the following result follows verbatim from \cite[Theorem~1.4]{BCE}, as a consequence of both \cite{BGT} and also of our Theorem~\ref{main theorem}.
\begin{thm}
\label{thm:D-finite}
Let $K$ be a field of characteristic $p$, let $G\subset K^*$ be a finitely generated subgroup, and let 
$$F(x)=\sum_{n\ge 0}a_nx^n$$
be a rational power series defined over $K$. Consider the sets
$$N:=\left\{n\in\N_0\colon a_n\in G\right\}\text{ and }N_0:=\left\{n\in\N_0\colon a_n\in G\cup\{0\}\right\}.$$
Then both $N$ and $N_0$ are expressible as a union of finitely many arithmetic progressions along with a set of Banach density $0$. 
\end{thm}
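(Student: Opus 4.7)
The plan is to realize $(a_n)$ as a dynamical sequence so that Theorem~\ref{main theorem} applies, and then combine this with the structure theorem for zero sets of linear recurrences in positive characteristic to also handle $N_0$.

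First, since $F(x)$ is rational over $K$, the standard matrix representation of rational generating functions yields an identity
\[ a_n = w^{\top} A^n v \]
for all $n$ sufficiently large, where $A \in \GL_d(K)$ for some $d \ge 1$ and $v, w \in K^d$; passage to a tail of the sequence is precisely what lets one replace the companion matrix by an invertible one (separating off the nilpotent part). Setting $X = \mathbb{A}^d$, $\varphi(y) = Ay$, $x_0 = v$, and $f(y) = w^{\top} y$ then realizes this tail as the dynamical sequence $f \circ \varphi^n(x_0)$. Here $\varphi$ is an automorphism of $X$ and $f$ is a polynomial, so the forward orbit automatically avoids the (empty) indeterminacy loci of $\varphi$ and of $f$ as a map to $\mathbb{P}^1$. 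Since modifying a subset of $\N_0$ by a finite set does not affect its status as a finite union of arithmetic progressions together with a set of Banach density zero, it suffices to analyze this tail.

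Applying Theorem~\ref{main theorem} to the data $(X, \varphi, f, G, x_0)$ immediately yields the desired decomposition of $N$. For $N_0$, write $N_0 = N \sqcup Z$, where $Z := \{n \in \N_0 : a_n = 0\}$ is disjoint from $N$ since $G \subset K^*$. The set $Z$ is the zero set of a $K$-valued linear recurrence, and the positive characteristic analogue of the Skolem-Mahler-Lech theorem (due to Derksen, with a refinement by Derksen-Masser \cite{DM12} already invoked elsewhere in this paper) asserts that $Z$ is itself a finite union of arithmetic progressions together with finitely many sets of the form $\{\sum_{i=1}^{r} c_i p^{k_i n_i} : n_i \in \N_0\}$. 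Such special sets have natural, hence upper Banach, density zero, so $Z$ has the required shape. Since the class of subsets of $\N_0$ appearing in the theorem is closed under finite unions, combining the decompositions of $N$ and $Z$ yields the conclusion for $N_0$.

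The only genuine subtlety — and the place where the positive characteristic setting differs meaningfully from characteristic zero — is that $Z$ need not itself be eventually periodic, forcing the weaker ``density-zero remainder'' form of the conclusion rather than the purely periodic description provided by classical Skolem-Mahler-Lech. This matches exactly the form of the statement, and since both ingredients (Theorem~\ref{main theorem} and the Derksen-Masser structure of $Z$) are already in place, no new obstacle arises.
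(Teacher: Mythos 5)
Your proof is correct and follows essentially the same path as the paper: realize a tail of $(a_n)$ as $a_n = w^{\top}A^n v$ with $A$ invertible, view this as a dynamical sequence on $\mathbb{A}^d$, and apply Theorem~\ref{main theorem} to get the decomposition of $N$; then split $N_0 = N \sqcup Z$ with $Z = \{n : a_n = 0\}$ and handle $Z$ by a Skolem-Mahler-Lech-type result. The only difference from the paper is the tool used for $Z$: the paper invokes the topological dynamical Mordell-Lang theorem of \cite{BGT} applied to the closed hyperplane $\{w^{\top}y = 0\}$, whereas you invoke Derksen's positive-characteristic Skolem-Mahler-Lech theorem. Both are valid; Derksen's theorem actually gives a finer structural description of $Z$ (finitely many $p$-nested sets of the form~\eqref{eq:form_set}) than the density-zero remainder from \cite{BGT}, though that extra structure is not needed here. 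One small attribution slip: the result you need for $Z$ is Derksen's \cite{Der}, not Derksen-Masser \cite{DM12} — the latter concerns unit equations over multiplicative groups rather than zero sets of linear recurrences directly — but this does not affect the correctness of the argument.
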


As mentioned before, an analogue of P\'olya's \cite{Pol} has apparently not been worked out in positive characteristic.  This is perhaps not surprising since the theory of linear recurrences in positive characteristic has historically lagged behind the development of the theory in characteristic zero; for example, Derksen's \cite{Der} analogue of the Skolem-Mahler-Lech theorem in positive characteristic was only proved in 2007. We thus give the positive characteristic analogue of P\'olya's theorem to fill a gap in the literature.  
\begin{proof}[Proof of Theorem \ref{thm:D-finite_2}]
It is well-known (see, for example, \cite[Proposition 2.5.1.4]{DML-book}) that there exist $s\ge 0$ and $m\ge 0$ and
$\alpha_1,\alpha_s\in \bar{K}^*$ and constants $c_{i,j}\in \bar{K}$ such that 
$$a_n = \sum_{i=1}^s \sum_{j=0}^m c_{i,j} n^j \alpha_i^n$$ for $n$ sufficiently large.  
Then since we are working in characteristic $p$, we have $a_{pn+k} = \sum_{i=1}^s b_{i,k}' \alpha_i^n$ for some constants $b_{i,k}'\in \bar{K}$, for all sufficiently large $n$.  In particular, there are $M,N\ge 1$ such that for each $b\in \{0,1,\ldots ,M-1\}$ we have 
for all $n\ge N$, $a_{Mn+b}$ is expressible in the form $$\sum_{i=1}^t e_i \beta_i^n$$ for some nonzero $\beta_1,\ldots ,\beta_t\in \bar{K}^*$ with $\beta_i/\beta_j$ not a root of unity for $i\neq j$ and some constants $e_1,\ldots ,e_t\in \bar{K}^*$.  Now let us consider a sequence of the form
$\sum_{i=1}^t e_i \beta_i^n$ which satisfies the above conditions and which takes values in $G\cup \{0\}$;
i.e.,
$$\sum_{i=1}^t e_i \beta_i^n \in G\cup \{0\}$$ for all $n$.  We claim that $t\le 1$. To see this, let $H$ denote the multiplicative subgroup of $\bar{K}^*$ generated by $G$ and $e_1,\ldots ,e_t, \beta_1,\ldots ,\beta_t$.
Then if $t\ge 2$ then by a result of Derksen and Masser (see \cite[Proposition 2.2]{BN18} for the precise statement used here)
there is some finite set $\mathcal{S}\subseteq \bar{K}^*$ such  
that for every $n$, we have $$(e_1/e_2) \cdot (\beta_1/\beta_2)^n = \alpha_{0,n}\alpha_{1,n}^{p^{i_{1,n}}}\cdots \alpha_{t-1,n}^{p^{i_{t-1,n}}}$$ for some 
$\alpha_{0,n},\ldots ,\alpha_{t-1,n}\in \mathcal{S}$ and $i_{1,n},\ldots ,i_{t-1,n}\in \{0,1,\ldots \}$.  In particular, since $\mathcal{S}$ is finite, there exist $\gamma_0,\ldots ,\gamma_{t-1}\in \mathcal{S}$ such that for $n$ in a subset $T$ of the natural numbers of positive density, we have
$$(e_1/e_2) \cdot (\beta_1/\beta_2)^n = \gamma_{0}\gamma_{1}^{p^{i_{1,n}}}\cdots \gamma_{t-1}^{p^{i_{t-1,n}}}.$$
Let $L$ be the field extension of $\bar{F}_p$ generated by $G_0$.
Now since $\beta_1/\beta_2$ is not a root of unity, there is some rank-one discrete valuation $\nu$ of $L$ such that $\nu(\beta_1/\beta_2)=a>0$.  Letting 
$c_i = \nu(\gamma_i)$ for $i=0,\ldots ,t-1$ and letting $a'=\nu(e_1/e_2)$ we see that 
\begin{equation}
\label{eq:aa}
a'+na = c_0 + c_1 p^{i_{1,n}}+\cdots + c_{t-1} p^{i_{t-1,n}}
\end{equation} for all $n$ in the positive density set $T$.  But arguing as in \cite[\S 2.3, Case 1]{GOSS}, taking $Q(n)=a'+na$ and the $\lambda_j=p$ for all $j$, we see that the set of $n$ for which Equation (\ref{eq:aa}) holds has density zero, a contradiction.  It follows that $t=1$ as desired.  It follows that for each $b\in \{0,\ldots ,M-1\}$,  
$a_{Mn+b}= \mu_b \delta_b^n$ for some $\mu_b,\delta_b\in K$ for $n\ge N$.  Moreover, since $a_{Mn+b}\in G\cup \{0\}$ for all $n$ we see that $\mu_b,\delta_b\in G$ if $\mu_b\delta_b\neq 0$.  It follows that 
$$F(x) = P(x) + \sum_{i=0}^{M-1} \mu_b x^{b+MN}/(1-\delta_b x^M)$$ for some polynomial $P(x)$ of degree $< MN$ with coefficients in $G\cup \{0\}$.  Notice that if $\delta_b$ or $\mu_b$ is zero, we can take $\delta_b=1$ and $\mu_b=0$ instead, so the result follows. 
\end{proof}



\end{document}